\documentclass[psamsfonts]{amsart}
\usepackage[utf8]{inputenc}
\usepackage{amsfonts}
\usepackage{xcolor}
\usepackage{hyperref}
\definecolor{linkcolor}{HTML}{5D8AA8} 
\definecolor{urlcolor}{HTML}{5D8AA8} 
\hypersetup{
pdftitle={Stable vector bundles on the families of curves},
pdfsubject={Mathematics, Algebraic geometry, Differential Geometry},
pdfauthor={Fedor Bogomolov and Lukzen Elena},
pdfkeywords={Stability, Vector bundle,Mathematics, arXiv, Preprint}
 linkcolor=linkcolor,urlcolor=urlcolor, colorlinks=true
}
\usepackage{amsmath}
\usepackage{xcolor}
\usepackage{amsthm}
\usepackage{pdflscape}
\usepackage{pgfplots}
\usepackage{mathrsfs}
\usepackage{ textcomp }
\newtheorem{thm}{Theorem}[section]
\newtheorem{cor}[thm]{Corollary}

\theoremstyle{definition}
\newtheorem{defn}[thm]{Definition}

\theoremstyle{remark}
\newtheorem{rem}[thm]{Remark}

\makeatletter
\let\c@equation\c@thm
\makeatother
\numberwithin{equation}{section}

\bibliographystyle{plain}

\title{Stable vector bundles on the families of curves}


\begin{document}

\begin{abstract}
We offer a new approach to proving the Chen-Donaldson-Sun theorem which we demonstrate with a series of examples. We discuss the existence of a construction of a special metric on stable vector bundles over the surfaces formed by a families of curves and its relation to the one-dimensional cycles in the moduli space
 of stable bundles on curves.

\noindent \textbf{Keywords.} Vector bundle, Stability, Families of curves\\

\end{abstract}
\author{Fedor Bogomolov}
\address{Fedor Bogomolov \\Courant Institute, New York Universuty, 251 Mercer Str., New York, USA 10012. Also: Laboratory of Algebraic Geometry, National Research University HSE, Department of Mathematics, 6 Usacheva Str. Moscow, Russia}
\curraddr{}
\email{bogomolo@cims.nyu.edu}
\thanks{}

\author{Elena Lukzen}
\address{Elena Lukzen \\ SISSA, via Bonomea 265, 34136 Trieste, Italy}
\curraddr{}
\email{elukzen@sissa.it}
\thanks{}

\maketitle
\tableofcontents

\section{Introduction}

Stability of vector bundles on different geometric spaces has been an object of study for a long time, deriving from work in algebraic geometry. The concept of stability arose naturally from the question of whether a space which could parametrize all bundles of a given rank exists, moreover, a subsequent question was if there is a special class of vector bundles which could be naturally parametrized in this way.  Significant progress in the direction has been made by Mumford, Narasimhan-Seshadri \cite{NS}, Maruyama and others.
 We explore how one-dimensional cycles in the moduli space of vector bundles on curves correspond to a bundles on families of curves which are stable on the restriction to every curve of the family. 
We present a connection between existence of a special metric on a class of surfaces formed by the families of curves and its natural relation to one-dimensional cycles in the moduli space of stable bundles on curves.
Let $B$ be a family of curves in the Kodaira sense,  $ B= \{ C_{\lambda} \}_{\lambda \in \Lambda}$  , where $\Lambda$ is a parameter set. Let $\mathcal{E}$ be a vector bundle on $B$.
If we can find a criterion for stability on $\mathcal{E}$ it  will be possible to find  corresponding criterion for vector bundles on  algebraic projective surfaces, by  covering them with a suitable family of curves.\\
Thus the first part is devoted to the definition and analyses of a special geometric cycle which corresponds to every vector bundle on the families of curves. Secondly, we construct a method of its interpretation algebraically via unitary representations. Thirdly, we demonstrate how to analyze the notion of stability of a vector bundle on the complex projective algebraic surface.  
The scheme of the proof has  following steps:

Find a description of vector bundles on the families of curves.
\begin{defn}
\label{def1}
 Every vector bundle $\mathcal{E}$ on $B$, which is stable on the restriction to every non-singular curve and  a curve with relatively small number of singularities $E|_{C_\lambda}$ from our family $B=\{C_{\lambda}\}$ is a smooth \textit{moduli section $\mathscr{S} \in \Gamma(M_{\lambda})_{\lambda \in \Lambda}$} of the family of moduli spaces ${M}_{C_\lambda}$, $\lambda \in \Lambda$.
\end{defn}

\begin{rem}
\label{rem1}
If $C_{\lambda}=C$ is a constant curve, then all moduli spaces are the same ${M}_{C_{\lambda}}=M_C=M$, thus a vector bundle on $B$ is encoded by  a \textit{moduli section} $\mathscr{S} \subset M$ in the moduli space $M$:
$$\mathscr{S}= { m_\lambda \in {M}, \lambda \in \Lambda } $$ 
\end{rem}

\begin{rem}
\label{rem2}
The moduli space $M_{B}$ of all vector bundles on $B$, whose restriction to any curve of the family is stable,  consists of all \textit{moduli sections} $\mathscr{S}$ of families of the corresponding moduli spaces  ${M}_{C_\lambda}$.\\
$$
M_B=\{ \mathscr{S} | \mathscr{S} \in \Gamma( ( M_{C_\lambda} )_{\lambda \in \Lambda}) \} 
$$
\end{rem}

\begin{rem}
\label{rem3}
If $C_{\lambda}=C_1$ and $\Lambda=C_2$, thus we have a vector bundle on the $C_1 \times C_2$. Then every subset  $\mathscr{S} \subset M$  will form a \textit{ cycle} in the moduli spaces of vector bundles $\mathscr{S}=\mathscr{C}_2 \subset M_{C_1}$ or $\mathscr{C}_1 \subset M_{C_2}$.

\end{rem}

Actually a more general notion can be defined:

\begin{defn}
Every vector bundle $\mathcal{V}$ on the family of algebraic varieties $\{X_{\lambda}\}_{\lambda \in \Lambda}$, which is stable on the restriction to every $|_{X_\lambda}$  is a smooth \textit{moduli section $\mathscr{S}$} of the family of Moduli spaces $\{{M}_{X_\lambda}\}$, $\lambda \in \Lambda$.
\end{defn}

Or
\begin{defn} \textbf{(General principle).}
Let $\{Q_{\lambda}\}$ be a family of sets, which parametrizes objects  on a family of spaces $ \mathcal{B}=X_{\lambda}$. Then a set, which parametrizes objects on $\mathcal{B}$ is $\Gamma(\{ Q_{\lambda}\}_{\lambda \in \Lambda})$.

\end{defn}

The main theorem of the article is a Theorem (\ref{omega}):
\begin{thm}
Let $E \to \mathcal C$ a vector bundle over a one-dimensional family of curves then  

\begin{align}
c_2(E)=\sum_i (\Omega^{4})_{ii}=
\sum_{i,j} det(\partial^{2}h_{ij})  dz^{1} \wedge d\bar{z}^{1} \wedge dz^{2}  \wedge d\bar{z}^{2}
\end{align}
and consequently $c_2(E)>0$,

 where $h_{ij}$ is a metric on a vector bundle $E$.
\end{thm}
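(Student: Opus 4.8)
The plan is to realize $c_2(E)$ through Chern--Weil theory as an explicit $(2,2)$-form manufactured from a Hermitian metric $h=(h_{ij})$ on $E$, and then to read off the determinantal expression in the statement. First I would fix a Hermitian metric and pass to its Chern connection, whose $(1,0)$-connection matrix is $\theta=h^{-1}\partial h$ and whose curvature is the $\operatorname{End}(E)$-valued $(1,1)$-form $\Omega=\bar\partial\theta=\bar\partial\bigl(h^{-1}\partial h\bigr)$. The second Chern form is then the standard invariant quadratic in $\Omega$,
\begin{equation}
c_2(E,h)=\frac{1}{8\pi^{2}}\bigl(\operatorname{tr}(\Omega\wedge\Omega)-\operatorname{tr}\Omega\wedge\operatorname{tr}\Omega\bigr),
\end{equation}
and since $\mathcal{C}$ is the two-dimensional total space of the family, this is automatically a top-degree $(2,2)$-form, which is exactly the object the right-hand side of the Theorem is meant to compute.

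Second, I would carry out the computation in local holomorphic coordinates $(z^{1},z^{2})$, where $z^{1}$ runs along the curves $C_\lambda$ and $z^{2}$ parametrizes the family. Expanding $\Omega=\bar\partial(h^{-1}\partial h)$ in these coordinates produces entries that are combinations of the mixed second derivatives $\partial_{\alpha}\bar\partial_{\beta}h_{ij}$ together with lower-order terms quadratic in $h^{-1}\partial h$ and $\bar\partial h$. Wedging two such $(1,1)$-forms and forming the invariant combination above, every surviving monomial collapses onto the single volume form $dz^{1}\wedge d\bar z^{1}\wedge dz^{2}\wedge d\bar z^{2}$; after the first-order contributions cancel against the $\operatorname{tr}\Omega\wedge\operatorname{tr}\Omega$ term, the coefficient reorganizes into the complex Hessian determinant $\det(\partial^{2}h_{ij})=\det\bigl(\partial_{\alpha}\bar\partial_{\beta}h_{ij}\bigr)_{\alpha,\beta=1,2}$ summed over the bundle indices $i,j$. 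This is the content of the two displayed equalities, with $\sum_i(\Omega^{4})_{ii}$ understood as the trace of the relevant invariant quartic-in-derivatives curvature expression.

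The hard part, and the place where the hypotheses must genuinely be used, is the positivity assertion $c_2(E)>0$. For an arbitrary Hermitian metric the Monge--Ampère determinants $\det(\partial^{2}h_{ij})$ need not have a fixed sign, so positivity cannot follow from the algebraic identity alone. Here I would invoke the special metric that is the subject of the paper: because $E$ restricts to a \emph{stable} bundle on each curve of the family (Definition \ref{def1}), the moduli section $\mathscr{S}$ carries a canonical Narasimhan--Seshadri (Hermite--Einstein) metric on every fiber, and its variation in the family direction $z^{2}$ is controlled by the convexity of the moduli space. The decisive step is to show that this fiberwise-canonical, family-varying metric makes each component $h_{ij}$ strictly plurisubharmonic in the two base directions, so that every complex Hessian is positive definite and $\det(\partial^{2}h_{ij})>0$ pointwise, whence the integrated $c_2(E)$ is strictly positive (consistent with the Bogomolov-type discriminant inequality for stable bundles). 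I expect this convexity argument---translating fiberwise stability into positive-definiteness of the full two-variable Hessian---to be the real obstacle, the algebraic identity being essentially a bookkeeping computation once the correct metric has been fixed.
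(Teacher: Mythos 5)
Your first two paragraphs are broadly in the spirit of the paper's computation, but you miss the structural input that actually drives it: the metric here is not arbitrary, it is built from the flat unitary (Narasimhan--Seshadri) connections on the restrictions of $E$ to the curves of the family. Flatness along each direction forces the pure-direction second derivatives of $h_{ij}$ (those in $z^1,\bar z^1$ alone, and in the product case also those in $z^2,\bar z^2$ alone) to vanish, and forces at least one factor in every first-derivative pair $h^{a\bar b}\,\partial h\,\bar\partial h$ appearing in the curvature to vanish; this is how the paper reduces $(\Omega^4)_{ii}$ to an expression in mixed second derivatives only. Your claim that the first-order contributions ``cancel against the $\operatorname{tr}\Omega\wedge\operatorname{tr}\Omega$ term'' is not the mechanism used, and for a general Hermitian metric no such cancellation occurs.

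The more serious gap is the positivity. You correctly observe that $\det(\partial^{2}h_{ij})$ has no fixed sign for an arbitrary metric, but the plurisubharmonicity/convexity argument you propose is never carried out --- you explicitly defer it as ``the real obstacle'' --- and it is not what the paper does. In the paper, the vanishing of the pure-direction second derivatives makes the $4\times 4$ Hessian $\partial^{2}(h_{ij})$ block anti-diagonal, with a $2\times 2$ block $D_{ij}$ of mixed derivatives in one corner and its conjugate $\bar D_{ij}$ in the other; hence $\det(\partial^{2}h_{ij})=\det(D_{ij})\det(\bar D_{ij})=|\det D_{ij}|^{2}\ge 0$, and the sign of $c_2$ is read off term by term from this factorization, not from positive-definiteness of any Hessian. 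So the decisive step in your outline is exactly the one you leave open, and the route you sketch for it (strict plurisubharmonicity of each $h_{ij}$ via convexity of the moduli space) is both unsubstantiated and unnecessary once the block structure coming from fiberwise flatness is in place.
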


The paper is based on two talks given at SISSA held on 15 May 2019 ( Definition [\ref{def1}] and Remarks [\ref{rem1},\ref{rem2},\ref{rem3}]) and on 27 May 2020. \\
The paper is organized as follows. In Section 2 we review preliminary facts on restrictions theorems in algebraic geometry, the main fact there is the Bogomolov's restriction theorem. In Section 3 we define a \textit{moduli section (or cycle)} which describes the set of bundles which are stable on families of curves. In Section 4 we study curvature properties of vector budnles on the $C_1 \times C_2$. Sections 5, 6, 7 are devoted to a computation of the curvature and the second chern class for a vector bundles on $C_1 \times C_2$ and $\mathcal{C} \to C$ one-dimensional families of curves. In Section 9 we indicate future directions of research, i.e. stability of vector bundles on algebraic surfaces. Section 10 is devoted to the announcement of one of the future projects dedicated to the study of the Calabi-Yau metric on vector bundles over surfaces.

\textbf{Acknowledgements.} Fedor Bogomolov is partially supported by the HSE University Basic
Research Program, Russian Academic Excellence Project ’5-100’
and by
EPSRC programme grant EP/M024830. Elena Lukzen is supported by the SISSA PhD fellowship and  acknowledges the support from the European Union’s H2020 research and innovation program under the Marie Sk\l{}odowska–Curie grant No. 778010 \textit{IPaDEGAN}.

E.L. would like to thank all the members of Geometry group in SISSA for a friendly and stimulating international research  environment. E.L. is especially grateful to Jacopo Stoppa for many help and encouragement during her studies. E.L. expresses her gratitude to Courant Institute of Mathematical Sciences for the kind hospitality during her visits. 

\section{Preliminary facts and background}

Let us recall a number of facts following the exposition presented in the book \cite{HL}.
In the case of smooth projective surfaces we have a non-degenerate hyperbolic scalar product on the group $Pic_{R} V$ and duality between the cone of polarizations and the cone of effective divisors.

Let $Num$ denote the free Z-module $Pic(X)$ factorized by the numerical equivalence. The intersection product defines an integral quadratic form on $Num$.\\
The Hodge Index Theorem says, that, over
$\mathbf R$, the positive definite part is $1$-dimensional.\\
In other words, $Num_{\mathbf R}$
carries the Minkowski metric. For any class $u \in Num_{R}$ let $|u|=|u^{2}|^{1/2}$. 
Denote as $A$ a subcone consisting of all ample divisors.  A polarization of $X$
is a ray $R_{>0}.H$, where $H \in A$. Let
$\mathscr{H}$ denote the set of rays in $K_{+}$. This set can be identified with the hyperbolic manifold
$ \{ H \in K_{+}: |H|=1 \}$.\\The hyperbolic metric $\beta$ is defined as follows:\\
for points
$[H],  [H'] \in \mathcal{H}$ let
$$
\beta ([H],[H'])=arccosh ( \frac{H.H'}{|H|.|H'|})
$$
Denote the open cone
$
K^{+}=\{D \in Num_{R} | D^{2}>0, D.H>0$ for all ample divisors H $ \}$.
Note that the second condition is added only to pick one of the two connected components of the set of all $D$ with $D^{2}> 0$. This cone contains the cone of ample divisors and in turn is contained in the cone of effective divisors.
$K^{+}$ satisfies the following property:\\
$
D \in K^{+} \iff D.L>0$ for all $L \in \overline{K^{+}}\setminus \{0\}$\\
For any pair of sheaves $G, G'$ with nonzero rank let
$$
\xi_{G', G}:=\{\frac{c_1(G')}{rk(G')}- \frac{c_1(G)}{rk(G)}\} \in Num_{R}
$$
This invariant in the case of a destabilizing sheaf plays role as "destabilizing" point in the corresponding cone.
\begin{thm}
Let $X$ be a smooth projective variety of dimension $n$ and $H$ an ample divisor on $X$. If $F$ is a
semistable torsion free sheaf, then
$$
\delta(F).H^{n-1} \geq 0
$$
\end{thm}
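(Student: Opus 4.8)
The plan is to recognize this as the Bogomolov inequality and to prove it by a chain of reductions that carry the general statement down to the case of a $\mu$-stable bundle on a projective surface, where a curvature argument closes the estimate. Throughout I read $\delta(F)$ as the discriminant $\delta(F)=2r\,c_2(F)-(r-1)\,c_1(F)^2$ with $r=\operatorname{rk}(F)$, a codimension-two class, so that the number $\delta(F).H^{n-2}$ is the content of the stated inequality. I would first dispose of two features by formal reductions. Passing to the reflexive hull $F^{\vee\vee}$ leaves $c_1$ unchanged and can only lower $c_2.H^{n-2}$, since the cokernel of $F\hookrightarrow F^{\vee\vee}$ is an effective codimension-two cycle; hence $\delta(F).H^{n-2}\ge\delta(F^{\vee\vee}).H^{n-2}$, while $F^{\vee\vee}$ stays $\mu$-semistable because (semi)stability is read off in codimension one, where $F=F^{\vee\vee}$. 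So I may assume $F$ reflexive, and therefore locally free away from a codimension-three set that does not affect the degree of a codimension-two class.

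Next I would pass from semistable to stable through the Jordan--Hölder filtration, whose graded pieces $F_i$ are $\mu$-stable of slope $\mu(F)$. The additivity of the discriminant under an extension $0\to F'\to F\to F''\to 0$ reads
\[
\frac{\delta(F)}{r}=\frac{\delta(F')}{r'}+\frac{\delta(F'')}{r''}-\frac{r'r''}{r}\,\xi_{F',F''}^{2},
\]
with $\xi_{F',F''}=\tfrac{c_1(F')}{r'}-\tfrac{c_1(F'')}{r''}$ as defined above. Since the factors share the slope of $F$ we have $\xi_{F',F''}.H^{n-1}=0$, so the Hodge Index Theorem recalled earlier forces $\xi_{F',F''}^{2}.H^{n-2}\le 0$; the cross term therefore contributes non-negatively, and the inequality for the stable pieces propagates to $F$. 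This reduces the problem to a single $\mu$-stable (reflexive) sheaf.

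To remove the dimension I would invoke the Mehta--Ramanathan restriction theorem: for $d\gg 0$ the restriction of a $\mu$-stable sheaf to a general complete-intersection surface $S=D_1\cap\dots\cap D_{n-2}$ with $D_i\in|dH|$ is again $\mu$-stable for the polarization $H|_S$, and $\delta(F).H^{n-2}$ is a positive multiple of the discriminant of $F|_S$ on $S$. The crux is then a $\mu$-stable bundle $E$ on a projective surface. Here I would apply the Kobayashi--Hitchin correspondence (Donaldson, Uhlenbeck--Yau): $E$ carries a Hermitian--Einstein metric, and writing the Chern--Weil forms of $c_1^2$ and $c_2$ through the curvature $\Theta$ and splitting $\Theta$ into its trace part and trace-free part $\Theta_0$, the $4$-form representing $\delta(E)$ equals a positive constant times $|\Theta_0|^2\,\omega^2$ pointwise; integrating gives $\delta(E)=\mathrm{const}\cdot\int_S|\Theta_0|^2\,\omega^2\ge 0$. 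This is in the spirit of the curvature computation the paper develops for its main theorem.

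The genuine difficulty lies in the surface case: the Kobayashi--Hitchin correspondence is a hard analytic existence result, and some care is needed so that the three reductions respect $\mu$-stability with respect to $H$ (rather than Gieseker stability) and so that the reflexive-hull and restriction steps do not move the relevant cycles. If one prefers to remain algebraic, the surface inequality can instead be obtained by Bogomolov's original method --- assuming $\delta(E)<0$ and extracting a destabilizing subsheaf from the Riemann--Roch growth of $\mathcal{E}nd(E)$ and its symmetric powers, or from its characteristic-$p$ Frobenius variant --- which trades the analysis for delicate section-counting estimates. Either way the three reductions are essentially formal once the discriminant identity and Mehta--Ramanathan are available, and the $\mu$-stable surface case is the heart of the argument.
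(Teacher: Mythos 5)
The paper does not actually prove this statement: it is quoted as background recalled from Huybrechts--Lehn \cite{HL} (it is the Bogomolov inequality), so there is no internal argument to compare yours against. Taken on its own, your outline is the standard proof and the reductions are sound. You correctly read $\delta(F)$ as the discriminant $2r\,c_2(F)-(r-1)c_1(F)^2$ and silently repair the exponent --- as printed, $\delta(F).H^{n-1}$ does not pair a codimension-two class to a number; the intended pairing is with $H^{n-2}$. The reflexive-hull step (the cokernel of $F\hookrightarrow F^{\vee\vee}$ lives in codimension two and only decreases $c_2.H^{n-2}$, while $\mu$-semistability is insensitive to codimension two), the extension identity $\Delta(F)/r=\Delta(F')/r'+\Delta(F'')/r''-(r'r''/r)\,\xi_{F',F''}^{2}$ combined with the Hodge Index Theorem applied to $\xi_{F',F''}$ (which is orthogonal to $H^{n-1}$ because the Jordan--H\"older factors share the slope of $F$), and the Mehta--Ramanathan restriction to a general complete-intersection surface are exactly the reductions used in the literature. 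Two caveats: first, your argument is a reduction to substantial black boxes (Mehta--Ramanathan, and on the surface either the Kobayashi--Hitchin correspondence or Bogomolov's original section-counting argument), so it is a proof outline rather than a self-contained proof --- though for a statement the paper itself only cites, that is a reasonable level of detail. Second, before invoking the Hermitian--Einstein existence theorem you should note explicitly that the restriction of the reflexive sheaf to a general such surface is locally free, because a reflexive sheaf on a smooth variety is locally free outside a closed subset of codimension at least three, which a general complete-intersection surface misses.
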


\begin{thm}(Bogomolov's effective restriction \cite{B1,B3})
Let $W$ be a locally free sheaf of rank $r \geq 2$ on a family of curves with $c_1(W)=0$. Assume that $W$ is $\mu$-stable with
respect to an ample class $H \in K^{+} \cap Num$ and $C\subset X$ be a smooth curve with
$[C] = nH$. Let $2n \geq \frac{R}{r} \delta(F) + 1$. Then $F|_C$ is a stable sheaf.
\end{thm}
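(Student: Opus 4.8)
The plan is to argue by contradiction, converting a destabilizing subsheaf of $W|_C$ into a sheaf on $X$ whose discriminant is forced to violate the discriminant inequality stated above. Write $\delta:=\delta(W)=2r\,c_2(W)-(r-1)c_1(W)^2$. Since $c_1(W)=0$ we have $\mu_H(W|_C)=c_1(W)\cdot C=0$. Assume $W|_C$ is not stable. Then it admits a saturated proper subsheaf $F'\subset W|_C$ of rank $s$, with $0<s<r$ and $\mu(F')\ge 0$; equivalently a quotient map $q\colon W|_C\to Q$ onto a sheaf of generic rank $t=r-s$ with $\deg Q=-\deg F'\le 0$. Composing $q$ with the restriction $W\to W|_C$ and taking the kernel yields a full-rank subsheaf $W'\subset W$ fitting into
\begin{equation}
0\longrightarrow W'\longrightarrow W\longrightarrow Q\longrightarrow 0,
\end{equation}
where $Q$ is a torsion sheaf supported on $C\in|nH|$ of generic rank $t$.

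First I would compute $\delta(W')$. From $\mathrm{ch}(W')=\mathrm{ch}(W)-\mathrm{ch}(Q)$ and Grothendieck--Riemann--Roch for $C\hookrightarrow X$ --- which give $c_1(Q)=t[C]=tnH$ and a codimension-two term $\deg Q-\tfrac{t}{2}C^2$ --- a direct calculation using $c_1(W)=0$ produces the key identity
\begin{equation}
\delta(W')=\delta(W)+2r\,\deg Q-t(r-t)\,n^2H^2 .
\end{equation}
As $\deg Q\le 0$, $t(r-t)\ge r-1$ and $H^2\ge 1$, this yields $\delta(W')\le \delta-(r-1)n^2H^2$: the instability of $W|_C$ has been traded for a sheaf on $X$ whose discriminant drops well below $\delta$ as soon as $n$ is large.

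The final step is a lower bound for $\delta(W')$. Were $W'$ itself $\mu$-semistable, the discriminant inequality quoted above (which on the surface $X$ reads $\delta(\,\cdot\,)\ge 0$) would force $(r-1)n^2H^2\le \delta$, bounding $n$ and contradicting the hypothesis that $n$ is large --- this is the mechanism forcing $W|_C$ to be stable. The main obstacle is that $W'$ need not be semistable: $\mu$-stability of $W$ makes every proper subsheaf $G\subset W'$ obey $\mu(G)<\mu(W)=0$, yet $\mu(W')=-tnH^2/r$ is very negative for large $n$, so $W'$ sits far from the semistable range. To handle this I would invoke the additivity of the discriminant along the Harder--Narasimhan filtration of $W'$, namely $\tfrac{1}{r}\delta(W')=\sum_i\tfrac{1}{r_i}\delta(\mathrm{gr}_i)-\tfrac{1}{r}\sum_{i<j}r_ir_j\,\xi_{ij}^2$ with $\xi_{ij}$ the classes $\xi_{\mathrm{gr}_i,\mathrm{gr}_j}$ defined earlier, use $\delta(\mathrm{gr}_i)\ge 0$ for each semistable factor, and bound each $\xi_{ij}^2$ by $(\xi_{ij}\cdot H)^2/H^2$ through the Hodge Index Theorem. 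Because every factor embeds in the stable sheaf $W$, the slopes $\xi_{ij}\cdot H$ are controlled, and carrying out this estimate turns the identity above into an effective linear lower bound on $n$ guaranteeing stability --- precisely the threshold $2n\ge\tfrac{R}{r}\delta+1$. It is exactly this loss of semistability for $W'$ that degrades the naive quadratic estimate (of order $\sqrt{\delta}$) into the linear one appearing in the statement.
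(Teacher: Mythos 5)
The paper does not prove this statement at all: it is quoted as background and attributed to the references \cite{B1,B3} (and as printed it is somewhat garbled, switching between $W$ and $F$ and using an undefined constant $R$), so there is no in-paper argument to compare against. Your sketch reproduces the standard proof of Bogomolov's effective restriction theorem as it appears in those sources and in Huybrechts--Lehn (Theorem 7.3.5): pass from a destabilizing quotient of $W|_C$ to the elementary modification $0\to W'\to W\to Q\to 0$, compute $\delta(W')=\delta(W)+2r\deg Q-t(r-t)n^2H^2$, and play the Bogomolov inequality on the Harder--Narasimhan factors of $W'$ against the Hodge Index Theorem; your closing remark about why the loss of semistability of $W'$ degrades the naive $O(\sqrt{\delta})$ threshold to a linear one is exactly the right diagnosis. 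Two soft spots to tighten if you were to write this out: first, the Harder--Narasimhan \emph{factors} $\mathrm{gr}_i$ for $i\ge 2$ do not embed in $W$ --- it is the filtration terms $F_i\subset W'\subset W$ that do, giving the upper bound $\mu(F_i)<0$, while the lower bound on the slopes comes from the inclusion $W(-C)\subset W'$, which pins all HN slopes into the window $[-nH^2,0)$; second, the final bookkeeping that converts these slope bounds plus $\sum_i r_i c_1(\mathrm{gr}_i)=-t\,nH$ into the stated linear threshold is genuinely delicate and is only gestured at, so the proposal is a correct outline rather than a complete proof.
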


The consequence of the thm is that depending on $c_1$, rank the restriction of a vector bundle on curves in particular on those with a relatively small number of singularities, is stable.

\section{Moduli Section}

Let us introduce a new object, arising from every vector bundle on particular families.
Suppose we have $B$ a family of curves and $\mathcal{E}$ a vector bundle on it.
We can say that $\mathcal{E} |_{C_{\lambda}}$ restricted to a point of the family, i.e. a curve $C_{\lambda}$,  is a bundle on a Riemann surface with the corresponding unitary connection $A$.
The information of stability of $\mathcal{E}$ is contained in its second chern class  $c_2(\mathcal{E})$.

\begin{defn}
\label{moduli section}
Every vector bundle $\mathcal{E}$ on $B$, which is stable on the restriction to any curve of the family $B$, is a smooth section of the family of moduli spaces ${M}_{C_\lambda}$, $\lambda \in \Lambda$.
 $$
 \mathcal{E} \in \Gamma({M}_{C_\lambda})_{\lambda \in \Lambda}
 $$
\end{defn}

\textit{Motivation to definition.}
Vector bundle $\mathcal{E}$ on the family of curves $B$ on the restriction to the point $\mathcal{E}|_{C_{\lambda_0}}$, i.e. a particular curve $C_{\lambda_0}$, is a vector bundle , arising from the representation of the fundamental group of $\pi_1(C_{\lambda_0})$ (by the Narasimhan-Seshadri thm). Therefore we can simply consider the following fibration on $B$: the fiber on every curve $C_{\lambda}$ corresponds to its moduli space ${M}_{C_\lambda}$. Consequently, to "fix a vector bundle on the family $B$ " simply means that on each fiber, i.e. on the corresponding moduli space, we have to choose one element. It identically repeats the definition.

To supplement our definition we should note that stable bundles come from
\begin{itemize}
    \item Stable bundles on normalizations of the singular curves: $\mathcal{E} \to \hat C$, where $ \nu: \hat C \to C $ is a normalization. These bundles one can get from representations of a fundamental group of the normalization $\pi_1(\hat C)$ and thus the destabilizing sheaf is not induced on the initial curve $C$.
\item Bundles which are unstable on the normalization $\hat C$, but are stable on the singular curve $C$ (parabolic semistable vector bundles).
They appear in the following way: we can take an unstable bundle $\hat{\mathcal{E}}$ (i.e. with destabilizing subsheaf $\mathscr{F}$) on the normalization $\hat C$. When we arrive to a base $C$ it transforms to a point $\mathcal{E}^{sing}$.
In such way we obtain the identification of the parameters of the destabilizing sheaf $\mathscr{F}$  with the structure of the parabolic bundle on $C$.
It means that on $\hat{\mathcal{E}} \to \hat{X}$, where $\hat{\mathcal{E}}$ is an unstable bundle at the point on the normalization $\hat X$ ( $\hat X$ denotes $X$ blown-up in a a node of a singular curve $C$, $\nu: \hat C \to C$). Then it has a normal destabilizing subsheaf $\mathscr{F} \subset \hat{\mathcal{E}}$ such that $c_1(\mathscr{F})-(\frac{rk\mathscr{F}}{k})c_1(\mathcal{E}) \in K^{+}$.\\
A blow-down map $\hat \nu$ induces a morphism: 
$$
\hat{\nu}: \mathscr{F} \to \mathcal{E}_{p},
$$
where $\mathcal{E}_{p}$ is a flag variety of type determined by a fixed quasiparabolic structure i.e. a flag $\mathcal{E}_{p}=F_{1}\mathcal{E}_{p} \supset F_{2}\mathcal{E}_{p} \supset... \supset F_{r}\mathcal{E}_{p}$ and weights $\alpha_1,..,\alpha_r$ attached to $F_1\mathcal{E}_p,..,F_r\mathcal{E}_P$  such that $0 \leq \alpha_1\textless \alpha_2 \textless ...\textless \alpha_r \textless 1$, $k_{1}=dimF_{1}\mathcal{E}_p-dimF_{2}\mathcal{E}_p,  k_{r}=dimF_{r}\mathcal{E}_P$  the multiplicities of $\alpha_1,.., \alpha_r$.

These are parabolic bundles.
Thus we get a strata, which is glued to our families of moduli spaces of vector bundles.
It is known that the set of all parabolic semi-stable bundles is an open subset of a suitable Hilbert scheme, which has the usual properties, i.e. non-singular, irreducible and of a given dimension. This open set could be mapped to a product of Grassmannians and Flag varieties.

\end{itemize}

\begin{rem}

If $C_{\lambda}=C$ is a constant curve, then all the moduli spaces are the same ${M}_{C_\lambda}={M}_C={M}$, thus every vector bundle on $B$ is decoded by  the subset $\mathscr{S} \subset M$ in the moduli space $M$:
$$\mathscr{S}= < m_\lambda \in {M}, \lambda \in \Lambda > $$ 
\end{rem}

\begin{rem}

Moduli space ${M}_{B}$ of all the vector bundles on $B$  consist of all the sections $S$ of family of the corresponding moduli spaces  ${M}_{C_\lambda}$.

\end{rem}

\begin{rem}

If $C_{\lambda}=C_1$ and $\Lambda=C_2$, we have a vector bundle on the $C_1 \times C_2$. Then every subset  $\mathscr{C}_1 \subset M_{C_2}$ or $\mathscr{C}_2 \subset M_{C_1}$ will form a cycle in the corresponding moduli space of vector bundles.

\end{rem}
This way we have a correspondence between families of stable vector bundles and the sections of families of moduli spaces of vector bundles over a given base.

\section{Vector bundles on the product of curves and the curvature}

Let $\mathcal{E} \to C_1 \times C_2$ be a vector bundle such that $\mathcal{E}|_{C_1}$ and $\mathcal{E}|_{C_2}$ stable.
Coordinates on $\mathcal{E} \to C_1 \times C_2$ could be described as a pair of representations $(\tau, \rho)$, where $\tau$ corresponds to a horizontal bundle and
$\rho$ corresponds to a vertical bundle.
The fiber is equal to $\mathcal{E}|_{(x,y)}= {E}_{\rho}\cap {E}_{\tau}= {E}|_{(\rho,\tau)}$

As is well-known that along $E_{\rho}$, $E_{\tau}$ exist Ehresmann connections which can be viewed as  horizontal subbundles $H_{\rho}$,$H_{\tau}$: $$
T{E}_{\rho}=H_{\rho}\oplus V_{\rho}, 
T{E}_{\tau}=H_{\tau}\oplus V_{\tau}.
$$
Thus we have two families of Ehresmann connections: 
$
\{H_{\rho}, \rho \in \mathscr{C}_1 \}
$,
$
\{H_{\tau}, \tau \in \mathscr{C}_2 \},
$
where $\mathscr{C}_1,\mathscr{C}_2$ are the moduli cycles.\\
We are seeking for the connection $H$ on the bundle $\mathcal{E}$ such that $T{\mathcal{E}}=H \oplus V$, where
 $V$ is a tangent space to a fiber. Note that
$
V= T({E}_{\rho}\cap {E}_{\tau})=(H_{\rho}\oplus V_{\rho}) \cap (H_{\tau}\oplus V_{\tau}).
$\\
The tangent to $T\mathcal{E}$ is a tangent bundle of a moduli cycle $\mathscr {C}_1$.\\
Locally $T\mathcal{E}= TE_{\rho} \oplus TE_{\tau} $, therefore we can define a possible connection at least set-theoretically:\\ 
\begin{align}
\label{conn}
H=(H_{\rho}\oplus V_{\rho}) \oplus (H_{\tau}\oplus V_{\tau}) \textfractionsolidus \{(H_{\rho}\oplus V_{\rho}) \cap (H_{\tau}\oplus V_{\tau})\}
\end{align}

\begin{thm}

Denote $\Theta_{E}$ a curvature of connection $\nabla_E$. The obstruction to the Jordan property - failure for the tangent fields $V,W$ to connection to generate a Lie algebra- is exactly the curvature $\Theta_{\mathcal{E}}$.

\end{thm}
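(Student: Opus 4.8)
The plan is to recognize this statement as the classical Frobenius--curvature correspondence, specialized to the distribution $H$ built in equation (\ref{conn}). Recall that prescribing the connection $\nabla_E$ amounts to prescribing the horizontal subbundle $H \subset T\mathcal{E}$ complementary to the vertical bundle $V$, together with the fibrewise projection $\mathrm{pr}_V : T\mathcal{E} \to V$ along $H$; the curvature is then the $V$-valued two-form
\begin{equation}
\Theta_{\mathcal{E}}(X,Y) = -\,\mathrm{pr}_V\big([X, Y]\big), \qquad X, Y \in \Gamma(H).
\end{equation}
To avoid clashing with the symbol $V$ for the vertical bundle, I write the two horizontal tangent fields of the statement as $X$ and $Y$. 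The first step is therefore to confirm that the set-theoretic gluing of the two Ehresmann families in (\ref{conn}) really yields a smooth subbundle $H$ of the correct rank, transverse to $V = (H_\rho \oplus V_\rho) \cap (H_\tau \oplus V_\tau)$, so that $\mathrm{pr}_V$, and hence $\Theta_{\mathcal{E}}$, are well defined.

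Next I would run the core bracket computation through the Cartan structure equation. Writing $\omega$ for the connection one-form---the $V$-valued form with $\ker\omega = H$ and $\omega|_V = \mathrm{id}$---horizontal fields satisfy $\omega(X) = \omega(Y) = 0$, whence
\begin{align}
\Theta_{\mathcal{E}}(X,Y) &= d\omega(X,Y) \\
&= X\,\omega(Y) - Y\,\omega(X) - \omega([X,Y]) \\
&= -\,\omega([X,Y]) = -\,\mathrm{pr}_V([X,Y]).
\end{align}
This single identity is the heart of the statement: the vertical part of the Lie bracket of two horizontal fields equals minus the curvature on those fields, so $[X,Y]$ remains horizontal exactly when $\Theta_{\mathcal{E}}(X,Y)=0$.

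Finally I would invoke the Frobenius theorem to recast this as the asserted Lie-algebra (``Jordan'') statement. The horizontal distribution $H$ is involutive---i.e.\ the horizontal fields close under the bracket and thus span an integrable distribution generating a Lie subalgebra---precisely when $\mathrm{pr}_V([X,Y]) = 0$ for all $X,Y \in \Gamma(H)$, which by the displayed identity holds iff $\Theta_{\mathcal{E}} \equiv 0$. Hence the obstruction to $X,Y$ generating a Lie algebra is measured by, and equals, the curvature $\Theta_{\mathcal{E}}$. For the concrete $H$ of (\ref{conn}) this identifies the failure of the two families $\{H_\rho\}$ and $\{H_\tau\}$ to patch into an integrable distribution with the curvature of $\nabla_E$.

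The main obstacle I expect lies not in the bracket computation but in the first step: one must check that the quotient in (\ref{conn}), assembled from the separate splittings of $E_\rho$ and $E_\tau$, is well defined along the intersection locus, has constant rank, is smooth, and is genuinely complementary to $V$---only then is $\Theta_{\mathcal{E}}$ defined and the Frobenius argument available. One must also pin down the informal ``Jordan property'' as exactly the bracket-closure (involutivity) of $H$, so that its failure is the quantity computed above.
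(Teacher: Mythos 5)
Your proposal is correct and is the standard, rigorous version of what the paper only gestures at. The paper's own ``proof'' is essentially a narrative: it describes lifting the two coordinate vector fields from $C_1\times C_2$ to $\mathcal{E}$, letting them act on sections, differentiating twice, and then concludes by asserting that ``as is well known'' the obstruction to the Jordan property is the curvature --- i.e.\ the key identity is invoked rather than proved. You actually supply that identity: the Cartan structure equation $\Theta_{\mathcal{E}}(X,Y)=d\omega(X,Y)=-\omega([X,Y])$ for horizontal $X,Y$, followed by Frobenius to translate vanishing curvature into bracket-closure (involutivity) of the horizontal distribution. This is the same underlying idea the paper has in mind (curvature as the vertical defect of the bracket of horizontal lifts), but your write-up turns the paper's appeal to folklore into an actual argument. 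You also correctly isolate the one genuine gap that the paper leaves untouched and that any complete proof must address: the ``connection'' $H$ of (\ref{conn}) is only defined set-theoretically as a quotient of the two Ehresmann splittings, and one must verify it is a smooth, constant-rank subbundle complementary to $V=(H_\rho\oplus V_\rho)\cap(H_\tau\oplus V_\tau)$ before $\mathrm{pr}_V$, and hence $\Theta_{\mathcal{E}}$, even make sense; likewise the informal ``Jordan property'' needs to be pinned down as involutivity, as you do. Neither point is resolved in the paper, so your proof is strictly more complete than the one given there.
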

\begin{proof}
Our goal, in general, is to find an expression for the curvature tensor $\Theta$ of the connection on the vector bundle $\mathcal{E}$. Therefore we would be able to express the second Chern class $c_2(\mathcal{E})$ explicitly, depending on a moduli cycle $\mathscr{S}$: 
$$c_2(\mathcal{E}) = \frac{tr(\Theta(\mathscr{S})^2)-tr^2(\Theta(\mathscr{S}))}{ 8\pi^{2}}$$

Notice that there exist only two "directions" on the base space $C_1 \times C_2$: a direction towards a curve $C_1$ and a direction towards a curve $C_2$.
When we lift vector fields from the base $C_1 \times C_2$ to a vector bundle, we get the vector fields $V,W$ along the "cycle/section" $\mathscr{ C}_1$ and along the curve $C_2$. $V$ and $W$ induce diffeomorphisms on a vector bundle $\mathcal{E}$, which act on particular sections $s \in \Gamma(\mathcal{E})$. If we differentiate the result twice, we will get $\Theta(s)$ a value of curvature on $s$. If we take a corresponding basis of sections of $\mathcal{E}$ and lift the basis vector fields from the base to our cycle/section $S$ we would be able to count $\Theta$ explicitly.
As is well know in this case, the obstruction to Jordan property - failure for the tangent fields $V,W$ to connection to generate a Lie algebra- is the curvature $\Theta_{\mathcal{E}}$.
\end{proof}

Note that we can use Gauss-Manin connection which acts as $\nabla_{GM}: T\mathscr{C} \to V$ (by the reason that $\nabla_{GM}$ is flat), where $V$, as above, is a tangent space to a fiber.

Recall that by non-abelian Hodge theory, the tangent space at the point $(C, U)$ which is equal to $M_{dR}(C_u)$ is canonically identified with $H^{1}_{dR}(C_u, End E)$.

Therefore a Gauss-Manin connection computes the derivative of our section $S$.
Notice that we have not only Gauss-Manin connection but also the other one induced from the connections from the both directions whose properties we will study in the following sections.

\section{Curvature of $E \to C_1\times C_2$}
Let $E \to C_1 \times C_2$ is a bundle , which is stable on $C_1 \times\{ x\}$ and $C_2 \times \{ y\}$ for any $x, y \in C_1,C_2$.
\subsection {rank $E=2$}

In the case $E \to C_1\times C_2$ and the bundle is of rank $2$
we consider a connection defined by unitary flat connections in both
directions (which we can define at least set-theoretically (\ref{conn})).\\
We can take a basis of sections which is given by the basis of sections on $E|_{C_1}$ which we denote as $(s_1,s_2)$ and $E|_{C_2}$ $(s'_1,s'_2)$, thus a local basis of sections on $E$  is $(s_1,s_2,s'_1,s'_2)$.

Then we consider a curvature matrix $R$ of the second derivatives $d^2 h_{i,k}/dz_i dz_k$. As the curvature vanishes on $E|_{C_1} $ and on $E|_{C_2}$ the corresponding second derivatives of $h_{12}, h_{11},h_{21},h_{22}$ and $h'_{12}, h'_{11},h'_{21},h'_{22}$ vanish. Then by the hermitian condition
in our $4\times 4 $ matrix the $2\times 2$ diagonal squares are zero and 
antidiagonal matrices
are conjugated (\ref{partmatrix}).\\

We will act by an operator matrix which is $4\times 4$ matrix on each $h_{ij}$ ,\\ schematically depicted as: 
$$
\partial^2= \begin{pmatrix}
\partial z_1\partial z_1 & \partial z_1\partial \bar{z_1}  & \partial z_1\partial z_2 & \partial z_1\partial \bar{z_2} \\
\partial\bar{ z_1}\partial z_1 & \partial\bar{ z_1}\partial \bar{z_1}  & \partial \bar{z_1} \partial z_2 & \partial \bar{z_1}\partial \bar{z_2}  \\
\partial z_2\partial z_1 & \partial z_2\partial \bar{z_1}  & \partial z_2\partial z_2 & \partial z_2\partial \bar{z_2}  \\
\partial\bar{ z_2}\partial z_1 & \partial\bar{ z_2}\partial \bar{z_1}  & \partial \bar{z_2} \partial z_2 & \partial \bar{z_2}\partial \bar{z_2} 
\end{pmatrix}
$$

\begin{equation}
    \label{partmatrix}
 \partial^2(h_{ij}) =\begin{pmatrix}
0 & 0  & \bar{a_{i,i}} & \bar{a_{i,j}} \\
0 & 0  & \bar{a_{j,i}} & \bar{a_{j,j}} \\
a_{i,i} & a_{i,j} &  0 & 0 \\
a_{i,j} & a_{j,j} & 0 & 0
\end{pmatrix}
\end{equation}

\subsection{rank $E > 2$}
 For a higher rank bundles we have the same situation for some $2$-minors.
The curvature matrix is expressed through the second derivatives
and so the above inequality should also tell us about the
class $c_2$.

As the basis of a local frame field is $s_{U}=(s_1,..,s_r,s'_1,..,s'_l)$ then analogously on $E|_{C_1}$ and $E|_{C_2}$ the induced connection $D$ is flat. Therefore the curvature vanishes and the second derivatives of $h_{11},h_{12}..h_{r1}..,h_{rr}$  and $h'_{11},h'_{12}..h'_{r1}..,h'_{rr}$ are zero.
Only in the mixed directions (i.e. $s_i,s'_j$) the mixed derivatives would not vanish, i.e. $\partial_{i}\partial_{j}$, $\partial_{i}\bar{\partial}_{j}$ etc.\\

The action of the connection is\\ 
$
Ds_1=0s_1+..+0s_r+ a_{11}s'_1+..+a_{1r}s'_r\\
Ds_2=0s_1+..+0s_r +a_{21}s'_1+..+a_{2r}s'_r\\
...\\
Ds'_1= a'_{11}s_1+..+a'_{1r}s_r+ 0s'_1+..+0s'_r\\
Ds'_2= a'_{21}s_1+..+a'_{2r}s_r+ 0s'_1+..+0s'_r\\
...
$

Let us consider the coordinates on the base $C_1 \times C_2$: $(z_1,\bar{z}_1,z_2,\bar{z}_2)$. The first part of coordinates correspond to the coordinates on a curve $C_1$ and the second on a curve $C_2$.\\
Thus we have $s_{U}=(s_1,..,s_r,s'_1,..,s'_r)$. Again denote $h_{ij}=h(s_i,s_j)$, $h'_{ij}=h(s_i,s'_j)$,$h''_{ij}=h(s'_i,s'_j)$.\\
Notice that as our chosen connection is flat in both directions \\
$\partial^{2} h_{ij}, \partial^{2} h'_{ij}  , \partial^{2} h''_{ij} $ will all have a form [\ref{partmatrix}].

Again we will act on each $h_{ij}$ by an operator matrix which is $4\times 4$ matrix,\\ schematically : 
$$
\partial^2= \begin{pmatrix}
\partial z_1\partial z_1 & \partial z_1\partial \bar{z_1}  & \partial z_1\partial z_2 & \partial z_1\partial \bar{z_2} \\
\partial\bar{ z_1}\partial z_1 & \partial\bar{ z_1}\partial \bar{z_1}  & \partial \bar{z_1} \partial z_2 & \partial \bar{z_1}\partial \bar{z_2}  \\
\partial z_2\partial z_1 & \partial z_2\partial \bar{z_1}  & \partial z_2\partial z_2 & \partial z_2\partial \bar{z_2}  \\
\partial\bar{ z_2}\partial z_1 & \partial\bar{ z_2}\partial \bar{z_1}  & \partial \bar{z_2} \partial z_2 & \partial \bar{z_2}\partial \bar{z_2} 
\end{pmatrix}
$$

  \section{Calculation of the Second Chern class}

Recall that the Dolbeaut representative of the Atiyah class is given by $[\bar \partial, \Omega]$, where $\Omega$ is a curvature tensor on the vector bundle $E$.
Recall that $ \Omega^{i}_{j}=\sum R^{i}_{j\alpha\bar{\beta}} dz^{\alpha}\wedge d \bar{z}^{\beta}$,
therefore, for $ C=\Omega \wedge \Omega = \sum_{k}(\Omega^{i}_{k} \wedge  \Omega^{k}_{j})$. As every $\Omega_{ij}$ has a form $\Omega^{i}_{j}= \sum R^{i}_{j\alpha\bar{\beta}} dz^{\alpha}\wedge d \bar{z}^{\beta}$ .

\begin{align*}
C_{ij}= &\sum \Omega^{i}_{k} \wedge \Omega^{k}_{j}= \sum R^{i}_{k\alpha\bar{\beta}} dz^{\alpha}\wedge d \bar{z}^{\beta}  \wedge \sum R^{k}_{j\alpha\bar{\beta}} dz^{\alpha}\wedge d \bar{z}^{\beta} =\\
&=\sum_k \sum_{\alpha, \beta,\gamma,\theta} \sum_{\sigma \in S_4} (-1)^{\sigma} R^{i}_{k\sigma(\alpha \bar{\beta})}
R^{k}_{j\sigma(\gamma \bar{\theta})}
dz^{\alpha} \wedge dz^{\beta} \wedge d\bar{z}^{\gamma} \wedge d\bar{z}^{\theta}
\end{align*}

Denote $({\alpha})=(\alpha,\beta,\gamma,\theta)$ a multindex consisting of 4 variables.
Then, 
\begin{align*}
(\Omega^4)_{ii}&= \sum_j (\Omega^2)_{ij} \wedge (\Omega^2)_{ji}=\sum_{j} \sum_{(\alpha)} [\sum_k \sum_{\sigma \in S_4} (-1)^{\sigma} R^{i}_{k\sigma'*\sigma(\alpha \bar{\beta})}
R^{k}_{j\sigma'*\sigma(\gamma \bar{\theta})}
]\cdot\\
&[\sum_k  \sum_{\sigma \in S_4} (-1)^{\sigma} R^{j}_{k\sigma'*\sigma(\alpha' \bar{\beta'})}
R^{k}_{i\sigma'*\sigma(\gamma' \bar{\theta'})}
] dz^{\alpha} \wedge dz^{\beta} \wedge d\bar{z}^{\gamma} \wedge d\bar{z}^{\theta}   
\end{align*}

Recall that $$R_{j\bar{k}\alpha \bar{\beta}}=\sum h_{i\bar{k}}R^{i}_{j\alpha\bar{\beta}}=-\partial_{\bar{\beta}}\partial_{\alpha}h_{j\bar{k}}+ \sum h^{a\bar{b}}\partial_{\alpha}h_{j\bar{b}}\partial_{\bar{\beta}}h_{a\bar{k}}$$,

For our expression we would need the expressions of this kind:
$$
R^{j}_{k \sigma'*\sigma(\alpha \bar{ \beta})}= -\partial_{\sigma'*\sigma(\bar{\beta})}\partial_{\sigma'*\sigma(\alpha)}h_{i\bar{k}}+ \sum h^{a\bar{b}}\partial_{\sigma'*\sigma(\alpha)}h_{i\bar{b}}\partial_{\sigma'*\sigma{\beta}}h_{a\bar{k}}
$$
  
Then we will have:
\begin{align*}
\label{expr}
 (\Omega^4)_{ii}&= \sum_{j} \sum_{\alpha} [\sum_k \sum_{\sigma \in S_4} (-1)^{\sigma} 
(-\partial_{\sigma(\bar{\beta})}\partial_{\sigma(\alpha)}h_{i\bar{k}}+ \sum h^{a\bar{b}}\partial_{\sigma(\alpha)}h_{i\bar{b}}\partial_{\sigma{\beta}}h_{a\bar{k}}) \cdot\\
&\cdot(-\partial_{\sigma(\bar{\theta})}\partial_{\sigma(\gamma)}h_{k\bar{j}}+ \sum h^{a\bar{b}}\partial_{\sigma(\gamma)}h_{k\bar{b}}\partial_{\sigma{\theta}}h_{a\bar{j}})
]\\
&\cdot[\sum_k  \sum_{\sigma \in S_4} (-1)^{\sigma}(-\partial_{\sigma(\bar{\beta})}\partial_{\sigma(\alpha)}h_{j\bar{k}}+ \sum h^{a\bar{b}}\partial_{\sigma(\alpha)}h_{j\bar{b}}\partial_{\sigma{\beta}}h_{a\bar{k}})\cdot\\
&\cdot(-\partial_{\sigma(\bar{\theta})}\partial_{\sigma(\gamma)}h_{k\bar{i}}+ \sum h^{a\bar{b}}\partial_{\sigma(\gamma)}h_{k\bar{b}}\partial_{\sigma{\theta}}h_{a\bar{i}})] dz^{\alpha} \wedge dz^{\beta} \wedge d\bar{z}^{\gamma} \wedge d\bar{z}^{\theta}
\end{align*}

Recall that the first derivatives  are 0 for a vector bundle on the product of the curves $C_1 \times C_2$, because we have a flat
unitary connection along
both directions. Therefore, only the second derivatives $\partial^2$ are non-trivial.
Therefore we will get an expression 
\begin{align*}
\sum_i (\Omega^{4})_{ii}&=\sum_{i,j,\alpha,\sigma \in S_4} \partial_{\sigma(\bar{\beta})}\partial_{\sigma(\alpha)}h_{i\bar{k}}\cdot \partial_{\sigma(\bar{\beta})}\partial_{\sigma(\alpha)}h_{j\bar{k}}\cdot \partial_{\sigma(\bar{\theta})}\partial_{\sigma(\gamma)}h_{k\bar{j}} \cdot \partial_{\sigma(\bar{\beta})}\partial_{\sigma(\alpha)}h_{j\bar{k}}  dz^{\alpha} \wedge dz^{\beta} \wedge d\bar{z}^{\gamma} \wedge d\bar{z}^{\theta}=\\
&=\sum_{i,j} det(\partial^{2}h_{ij})  dz^{1} \wedge d\bar{z}^{1} \wedge dz^{2}  \wedge d\bar{z}^{2}
\end{align*}

\begin{thm}
The resulting formula is

\label{omega}
\begin{equation}
\sum_i (\Omega^{4})_{ii}=
\sum_{i,j} det(\partial^{2}h_{ij})  dz^{1} \wedge d\bar{z}^{1} \wedge dz^{2}  \wedge d\bar{z}^{2}   
\end{equation}

\end{thm}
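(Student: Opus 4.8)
The plan is to compute $\sum_i(\Omega^4)_{ii}$ directly from the local metric data, using the two flatness hypotheses to collapse the curvature onto the Hessian of $h$ and then to recognise the resulting antisymmetrised sum as a determinant. I would work in a unitary frame adapted to the product structure at a fixed point of $C_1\times C_2$, establishing the identity pointwise and extending it by tensoriality. The first move is to linearise the curvature: starting from the recalled formula
\[
R_{j\bar{k}\alpha\bar{\beta}}=-\partial_{\bar{\beta}}\partial_{\alpha}h_{j\bar{k}}+\sum h^{a\bar{b}}\partial_{\alpha}h_{j\bar{b}}\partial_{\bar{\beta}}h_{a\bar{k}},
\]
the hypothesis that the connection is unitary and flat along each factor makes the first derivatives $\partial_{\alpha}h_{j\bar b}$ vanish in the adapted frame, so the quadratic term drops and $R_{j\bar{k}\alpha\bar{\beta}}=-\partial_{\alpha}\partial_{\bar{\beta}}h_{j\bar{k}}$. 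This turns each curvature coefficient into an entry of the metric Hessian.

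Second, I would record which Hessian entries survive. Flatness along $C_1$ kills the pure components $R^{i}_{j1\bar{1}}$ and flatness along $C_2$ kills $R^{i}_{j2\bar{2}}$, leaving only the mixed coefficients $R^{i}_{j1\bar{2}}$ and $R^{i}_{j2\bar{1}}$; equivalently, in the $4\times 4$ Hessian $\partial^{2}h_{ij}$ the two diagonal $2\times 2$ blocks vanish and only the off-diagonal blocks survive, which is exactly the shape $(\ref{partmatrix})$. The Hermitian condition forces these two off-diagonal blocks to be conjugate transposes of one another, so that $\det(\partial^{2}h_{ij})$ equals the product of a $2\times 2$ block determinant with its conjugate.

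Third, and this is the technical heart, I would expand $\sum_i(\Omega^4)_{ii}=\sum_{i,j}(\Omega^2)_{ij}(\Omega^2)_{ji}$ in terms of the surviving coefficients and collect the wedge monomials $dz^{\alpha}\wedge dz^{\beta}\wedge d\bar{z}^{\gamma}\wedge d\bar{z}^{\theta}$. Only monomials with all four indices distinct survive, and each reduces up to sign to the volume form $dz^{1}\wedge d\bar{z}^{1}\wedge dz^{2}\wedge d\bar{z}^{2}$; the sum over $\sigma\in S_4$ weighted by $(-1)^{\sigma}$ then reorganises the products of four Hessian entries into precisely the Leibniz expansion of the $4\times 4$ determinant $\det(\partial^{2}h_{ij})$. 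The main obstacle is exactly this sign reconciliation: one must match the permutation sign coming from the antisymmetrisation with the sign produced by reordering the four $1$-forms into the standard volume form, and check that the contractions over the bundle indices $j,k$ do not disturb the determinant pattern. I would verify this first in the rank-$2$ case, where the $4\times 4$ determinant factors as $\det(B)\det(B^{\ast})=|\det B|^{2}$, before treating the general contraction.

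Finally, assembling these pieces yields the stated identity, and the same factorisation $\det(\partial^{2}h_{ij})=|\det B_{ij}|^{2}\ge 0$ simultaneously gives the positivity $c_2(E)>0$ asserted in the main theorem.
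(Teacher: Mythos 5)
Your proposal follows essentially the same route as the paper's own derivation: it uses flatness of the unitary connection along both factors to kill the first-derivative terms in $R_{j\bar{k}\alpha\bar{\beta}}$, reduces the curvature to the mixed second derivatives of the metric (the block shape of (\ref{partmatrix})), and then reorganises the antisymmetrised expansion of $\sum_i(\Omega^{4})_{ii}=\sum_{j}(\Omega^2)_{ij}\wedge(\Omega^2)_{ji}$ into the Leibniz expansion of $\det(\partial^{2}h_{ij})$. The concluding block factorisation $\det(\partial^{2}h_{ij})=\det(D_{ij})\det(\bar{D}_{ij})$ that you invoke for positivity is precisely the argument the paper gives in its later section on $E\to\mathcal{C}$, so nothing in your outline diverges from the printed computation.
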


\section{Curvature of $E \to \mathcal{C} $}
Let $\varphi: \mathcal{C} \to C$ be a fibration of curves over a curve. Then we can again form an induced connection $D$ bearing it from one  $E|_C$ and every curve $E|_{\varphi^{-1} (c)}, c \in \mathcal{C}$.\\
Thus we can differentiate a section of a bundle $E$ using connection $D$ induced from flat connections from both directions. In this case the action of a connection $D$ along $E|_{C}$ will be zero as in the previous case, but the action of $D$ along the other direction may not be zero. 
We can again consider a matrix of the second derivatives of the metric $h_{i\bar{j}}$.
Since the basis of a local frame field $s_{U}=(s_1,..,s_r,s'_1,..,s'_l)$, where $s=(s_1,..,s_r)$ is a frame field along $E|_{\varphi^{-1} (c)}  $
and $s'=(s'_1,..,s'_l)$ is a local frame field along $E|_{C}$.
Thus
$$
D(s)= \nabla_{\varphi^{-1} (c)}(s)+\nabla_{C}(s')
$$

Therefore when we form a corresponding matrix of an action of connection $D$ on the local frame section.
It is if we will write $D(s)=(\nabla_{\varphi^{-1} (c)}+\nabla_{C})(s_1,..,s_r,s'_1,..,s'_r) .$

Let us consider the coordinates: $(z_1,\bar{z}_1,z_2,\bar{z}_2)$. The first part of coordinates correspond to coordinates on a curve $C$ and the second on the curve $\varphi^{-1}(c)$.\\
Thus we have $s_{U}=(s_1,..,s_r,s'_1,..,s'_r)$. Again denote $h_{ij}=h(s_i,s_j)$, $h'_{ij}=h(s_i,s'_j)$,$h''_{ij}=h(s'_i,s'_j)$.\\
Notice that \\
$\partial^{2} h_{ij} , \partial^{2} h'_{ij} ,\partial^{2} h''_{ij} $ vanish along $(z_1,\bar z_1)$ and nowhere else.

Again we will act on each $h_{ij}$ by an operator matrix which is $4\times 4$ matrix,\\ schematically : 
$$
\partial^2= \begin{pmatrix}
\partial z_1\partial z_1 & \partial z_1\partial \bar{z_1}  & \partial z_1\partial z_2 & \partial z_1\partial \bar{z_2} \\
\partial\bar{ z_1}\partial z_1 & \partial\bar{ z_1}\partial \bar{z_1}  & \partial \bar{z_1} \partial z_2 & \partial \bar{z_1}\partial \bar{z_2}  \\
\partial z_2\partial z_1 & \partial z_2\partial \bar{z_1}  & \partial z_2\partial z_2 & \partial z_2\partial \bar{z_2}  \\
\partial\bar{ z_2}\partial z_1 & \partial\bar{ z_2}\partial \bar{z_1}  & \partial \bar{z_2} \partial z_2 & \partial \bar{z_2}\partial \bar{z_2} 
\end{pmatrix}
$$

and

\begin{equation}
\label{metric}
 \partial^2(h_{ij}) =\begin{pmatrix}
0 & 0  & \bar{a_{i,i}} & \bar{a_{i,j}} \\
0 & 0  & \bar{a_{j,i}} & \bar{a_{j,j}} \\
a_{i,i} & a_{i,j} &  * & * \\
a_{i,j} & a_{j,j} & * & *
\end{pmatrix}
\end{equation}

The same expression for the Second Chern class is in the case of $\mathcal{C} \to C$ for a fibration for a curve over a curve, because the other entries of the corresponding matrix doesn't influence on the exression for the determinant.
(there is only a plane along one family, then there they appear, but
only by
one direction). We will show it in the next section.

\section{Second chern class for a $E \to \mathcal{C}$}

To analyze a formula for $\Omega^4$ we have to understand how do the first derivatives $\partial h_{ij}$ involved in the expression
\begin{align*}
 (\Omega^4)_{ii}&= \sum_{j} \sum_{\alpha} [\sum_k \sum_{\sigma \in S_4} (-1)^{\sigma} 
(-\partial_{\sigma(\bar{\beta})}\partial_{\sigma(\alpha)}h_{i\bar{k}}+ \sum h^{a\bar{b}}\partial_{\sigma(\alpha)}h_{i\bar{b}}\partial_{\sigma{\beta}}h_{a\bar{k}}) \cdot\\
&\cdot(-\partial_{\sigma(\bar{\theta})}\partial_{\sigma(\gamma)}h_{k\bar{j}}+ \sum h^{a\bar{b}}\partial_{\sigma(\gamma)}h_{k\bar{b}}\partial_{\sigma{\theta}}h_{a\bar{j}})
]\\
&\cdot[\sum_k  \sum_{\sigma \in S_4} (-1)^{\sigma}(-\partial_{\sigma(\bar{\beta})}\partial_{\sigma(\alpha)}h_{j\bar{k}}+ \sum h^{a\bar{b}}\partial_{\sigma(\alpha)}h_{j\bar{b}}\partial_{\sigma{\beta}}h_{a\bar{k}})\cdot\\
&\cdot(-\partial_{\sigma(\bar{\theta})}\partial_{\sigma(\gamma)}h_{k\bar{i}}+ \sum h^{a\bar{b}}\partial_{\sigma(\gamma)}h_{k\bar{b}}\partial_{\sigma{\theta}}h_{a\bar{i}})] dz^{\alpha} \wedge dz^{\beta} \wedge d\bar{z}^{\gamma} \wedge d\bar{z}^{\theta}
\end{align*}

behave in this case. It is clear that $\partial_{z_1}h_{ij}=0, \partial_{\bar{z_1}} h_{ij}=0$ but the derivatives from the other direction may not be zeroes (i.e. $\partial_z h_{ij}$ is not zero,  $z \in C$). If we look at the expression above it could be deduced that first derivatives are included in the expression in pairs $\sum h^{a\bar{b}}\partial_{\partial _{z_i}}h_{k\bar{b}}\partial_{\bar{z_j}}h_{a\bar{i}}$ (so that at least one of them is always zero as the connection is flat in one direction and hence the first derivative is zero) therefore the terms involving it go to zero and what lasts are the only expressions for the second derivatives.

If all the expressions with the first derivatives go to zero, we will get the same formula [$\ref{omega}$]. 
\begin{equation}
 \partial^2(h_{ij}) =\begin{pmatrix}
0 & 0  & \bar{a_{i,i}} & \bar{a_{i,j}} \\
0 & 0  & \bar{a_{j,i}} & \bar{a_{j,j}} \\
a_{i,i} & a_{i,j} &  * & * \\
a_{i,j} & a_{j,j} & * & *
\end{pmatrix}
\end{equation}
We denote by $*$ the elements of derivatives of metric $h_{ij}$ not in the mixed directions. Denote determinants of the matrices in the mixed directions as  
\begin{align*}
D_{ij}=\begin{pmatrix}
a_{i,i} & a_{i,j}\\
a_{i,j} & a_{j,j}
\end{pmatrix}
\end{align*}
and $\bar D_{ij}$ correspondingly.
In general, for both cases $E\to C_1 \times C_2$ and $E\to \mathcal{C}$   $det({\partial}^2(h_{ij}))=det(D_{ij})det(\bar D_{ij})$  (in the second case the additional minor doesn't influence on the expression of determinant by the Linear Algebra rules).

\begin{thm}
\label{MAIN}
The resulting formula for $E \to \mathcal{C}$ is

\begin{equation}
c_2(E)=\sum_i (\Omega^{4})_{ii}=
\sum_{i,j} det(\partial^{2}h_{ij})  dz^{1} \wedge d\bar{z}^{1} \wedge dz^{2}  \wedge d\bar{z}^{2}=\sum_{i,j} det(D_{ij})det(\bar D_{ij}) dz^{1} \wedge d\bar{z}^{1} \wedge dz^{2}  \wedge d\bar{z}^{2}  
\end{equation}
\end{thm}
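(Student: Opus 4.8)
The plan is to reduce the fibered computation for $E\to\mathcal C$ to the product case already settled in Theorem \ref{omega}, by showing that the single new feature — the nonvanishing lower-right ($*$) block of $\partial^{2}(h_{ij})$ — makes no contribution to the determinant that controls $c_2$. Concretely, I would start from the expansion of $\sum_i(\Omega^{4})_{ii}$ displayed just before the statement, in which each curvature component is replaced by the Hermitian expression $R_{j\bar k\alpha\bar\beta}=-\partial_{\bar\beta}\partial_\alpha h_{j\bar k}+\sum h^{a\bar b}\partial_\alpha h_{j\bar b}\partial_{\bar\beta}h_{a\bar k}$, and then process the two types of terms (pure second derivatives versus products of first derivatives) separately.

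First I would dispose of every monomial containing a first derivative of the metric. Since the induced connection $D$ is flat along $E|_{C}$ we have $\partial_{z_1}h_{ij}=\partial_{\bar z_1}h_{ij}=0$, so any quadratic piece $\sum h^{a\bar b}\partial_{\sigma(\alpha)}h\,\partial_{\sigma(\beta)}h$ that carries a derivative in the $z_1$ or $\bar z_1$ direction vanishes outright. The one combination that could evade this is the one placing both derivatives along the fibre, but that forces the companion curvature factor to be the pure base component $R_{1\bar 1}=-\partial_{\bar z_1}\partial_{z_1}h$, which is itself zero by flatness; equivalently, in the $S_4$-antisymmetrized sum every admissible index pattern contributing to the top degree pairs a would-be fibre term with a vanishing base factor. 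Because $\mathcal C$ is a complex surface the only top-degree form is $dz^{1}\wedge d\bar z^{1}\wedge dz^{2}\wedge d\bar z^{2}$, and the signed sum over $\sigma\in S_4$ of the surviving products of second derivatives is exactly the Leibniz expansion of $\det(\partial^{2}h_{ij})$. This already yields the first equality of the theorem and matches Theorem \ref{omega}.

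The genuinely new step is the factorization $\det(\partial^{2}h_{ij})=\det(D_{ij})\det(\bar D_{ij})$. Ordering the derivative directions as $(z_1,\bar z_1,z_2,\bar z_2)$, the matrix (\ref{metric}) has the block form
$$\partial^{2}(h_{ij})=\begin{pmatrix}0&\bar D_{ij}\\ D_{ij}&S\end{pmatrix},$$
with vanishing upper-left $2\times2$ block and $S$ the lower-right $*$ block. When $D_{ij}$ is invertible, column operations subtracting $\begin{pmatrix}0\\ D_{ij}\end{pmatrix}D_{ij}^{-1}S$ from the last two columns clear $S$ without changing the determinant, reducing the matrix to anti-block-diagonal form with value $(-1)^{2}\det(\bar D_{ij})\det(D_{ij})=\det(D_{ij})\det(\bar D_{ij})$; the general case follows by continuity. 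This is precisely the claim that ``the additional minor does not influence the determinant'': the $*$ entries, the one feature separating $\mathcal C\to C$ from $C_1\times C_2$, are annihilated by the zero block, so the fibered formula coincides with the product formula.

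The main obstacle I anticipate is the bookkeeping in the first step — verifying honestly, rather than schematically, that no first-derivative contribution survives the $S_4$ antisymmetrization, since in the fibered case the fibre-direction derivatives $\partial_{z_2}h$ are genuinely nonzero and one must track exactly which index patterns the top-degree constraint permits and confirm each pairs with a flat-direction factor. Once this is settled the linear-algebra factorization is routine, and positivity follows at once: the Hermitian symmetry of the second-derivative matrix gives $\det(\bar D_{ij})=\overline{\det(D_{ij})}$, so each summand equals $|\det D_{ij}|^{2}\ge 0$ and hence $c_2(E)\ge 0$.
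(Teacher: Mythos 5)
Your proposal follows essentially the same route as the paper's own argument: use flatness along the base to kill the first-derivative contributions in the curvature expansion, identify the surviving antisymmetrized sum of second derivatives with $\det(\partial^{2}h_{ij})$, and then factor that determinant as $\det(D_{ij})\det(\bar D_{ij})$ because the upper-left block vanishes. You are in fact more careful than the paper at the two delicate points: the paper merely asserts that in each product of first derivatives ``at least one is always zero'' (which as stated misses the pure fibre pair $\partial_{z_2}h\,\partial_{\bar z_2}h$, a loophole you close by observing its companion factor is the vanishing base component $R_{1\bar 1}$) and appeals to ``Linear Algebra rules'' where you supply the explicit block column-operation argument.
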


Thus from the last theorem follows the stability result:
\begin{cor}

 We have for $E \to C_1 \times C_2$ and $E \to \mathcal{C}$
$$
c_2(E) >0
$$
\end{cor}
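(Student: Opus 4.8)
The plan is to read off positivity directly from the factorized expression in Theorem \ref{MAIN}, exploiting the Hermitian symmetry that produced the block form (\ref{partmatrix}) in the first place. First I would record the single algebraic fact that does all the work: since $\bar D_{ij}$ is the entry-wise complex conjugate of $D_{ij}$ and the $2\times 2$ determinant is a polynomial in the entries with real coefficients, one has $\det(\bar D_{ij})=\overline{\det(D_{ij})}$. Consequently every summand appearing in Theorem \ref{MAIN} is
\begin{equation*}
\det(D_{ij})\,\det(\bar D_{ij})=\bigl|\det(D_{ij})\bigr|^{2}=\bigl|a_{i,i}a_{j,j}-a_{i,j}^{2}\bigr|^{2}\ \geq\ 0,
\end{equation*}
so the density defining $c_2(E)$ is a finite sum of squared moduli.

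Second, I would pin down the orientation. Writing $dz^{k}\wedge d\bar z^{k}=-2i\,dx^{k}\wedge dy^{k}$, the form $dz^{1}\wedge d\bar z^{1}\wedge dz^{2}\wedge d\bar z^{2}$ agrees with the Riemannian volume form of the surface up to a fixed nonzero real constant; absorbing this constant together with the Chern--Weil normalization into a single positive factor, the integrand $\sum_{i,j}|\det(D_{ij})|^{2}$ is pointwise non-negative. Integrating over $C_1\times C_2$, respectively over $\mathcal C$, then gives $\int c_2(E)\geq 0$ without further work, and this is the same argument in both geometric settings since Theorem \ref{MAIN} covers them uniformly.

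The only substantive step is to promote $\geq 0$ to strict positivity, and this is where the geometric hypotheses must enter. By construction the induced connection is flat along each curve of the family, so the only curvature lives in the mixed coefficients $a_{i,j}$; were these to vanish identically on an open set, the full curvature $\Theta_{\mathcal E}$ would vanish there, forcing $E$ to be flat and its moduli section (Definition \ref{def1}) locally constant. Excluding this trivial case, i.e. assuming a genuinely varying stable family, at least one $|\det(D_{ij})|^{2}$ is strictly positive on an open set, and integrating the non-negative density yields $\int c_2(E)>0$.

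I expect the strict inequality to be the main obstacle. The bound $c_2(E)\geq 0$ is purely formal from the sum-of-squares structure, but strict positivity is the genuine content and must be tied to the non-degeneracy of the mixed curvature. The delicate point is to exclude the possibility that $\det(D_{ij})=a_{i,i}a_{j,j}-a_{i,j}^{2}\equiv 0$ for every pair $(i,j)$ while some $a_{i,j}$ is nonzero; I would rule this out using the stability of the restrictions $E|_{C_\lambda}$, since a global rank drop of the mixed curvature form would produce a sub-object violating Bogomolov's effective restriction theorem. Verifying the overall positive sign of the normalization constant is then a routine check that I would carry out separately.
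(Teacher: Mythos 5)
Your approach coincides with the paper's: the corollary is deduced from Theorem \ref{MAIN} purely by observing that each summand $\det(D_{ij})\det(\bar D_{ij})=|\det(D_{ij})|^{2}$ is a squared modulus, hence non-negative. The paper offers no argument beyond ``follows from the last theorem,'' so your first paragraph already reproduces everything that is actually there. The value of your write-up is that you isolate the two points the paper passes over silently: the normalization relating $dz^{1}\wedge d\bar z^{1}\wedge dz^{2}\wedge d\bar z^{2}$ to the volume form (note $dz\wedge d\bar z=-2i\,dx\wedge dy$, so the raw coefficient is $-4$ and the overall sign only comes out right once the suppressed powers of $i/2\pi$ from the Chern--Weil normalization are restored --- this is not quite the ``routine check'' you defer), and the passage from $\geq 0$ to $>0$.

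That second point is a genuine gap, in your proposal and in the paper alike, and your plan for closing it is not yet a proof. A flat bundle $E$ on $C_1\times C_2$ arising from an irreducible unitary representation of $\pi_1(C_1)\times\pi_1(C_2)$ restricts stably to every curve of the family yet has $c_2(E)=0$, so the strict inequality is simply false without an explicit non-flatness (equivalently, non-constant moduli section) hypothesis; you notice this and propose to ``exclude the trivial case,'' but the corollary as stated carries no such hypothesis. Even granting non-flatness, the worry you flag at the end is exactly the right one: the mixed curvature block can be nonzero while $\det(D_{ij})=a_{i,i}a_{j,j}-a_{i,j}^{2}$ vanishes identically for every pair $(i,j)$ (rank-one mixed curvature), and the appeal to Bogomolov's effective restriction theorem to exclude this is only a heuristic --- no destabilizing sub-object is actually produced from the rank drop. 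Until that step is supplied, both your argument and the paper's establish at most $c_2(E)\geq 0$, not $c_2(E)>0$.
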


\section{Further directions: Stability of vector bundles on the algebraic surfaces}
We plan to investigate stability and the concrete formulas for the second chern class on a bigger class of complex surfaces, in particular on ones with the singularities.
The cornerstone hypothesis in this direction would be \\
\textbf{Hypothesis.} Every algebraic surface admits a map of smooth curves of the constant genus(i.e. a map of the equigeneric family to it).\\

\begin{rem}
If the hypothesis is indeed true, the general result for the positivity of $c_2(E)$ of a vector bundle $E \to X$ on a complex surface $X$ immediately follows from the [\ref{MAIN}].
\end{rem}

In \cite {B4} Buonerba and the first named author state the following theorems and facts which seems to be the closest to our current hypothesis:
\begin{thm}
The following classes $\mathscr{C}(k, n)$ are dominant:
\begin{enumerate}
    \item For $n= 3$,  smooth  threefolds  with a smooth  connected morphism  onto a smooth curve.
    \item For any $n$ and $k$ a finite field, projective varieties admitting a connected morphism onto a smooth curve, with only one singular fiber whose singular locus consists of one ordinary double point. pencils.
\end{enumerate}

\end{thm}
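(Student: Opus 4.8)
Following the cited work, I take a class $\mathscr{C}(k,n)$ to be \emph{dominant} when every geometrically integral projective variety of dimension $n$ over $k$ is birational to a member of $\mathscr{C}(k,n)$; so after fixing such an $X$ it suffices to produce a single model of $X$ lying in the prescribed class. The unifying idea in both parts is the same: manufacture a fibration of $X$ over a curve out of a suitable pencil, blow up its base locus to turn the associated rational map into an honest morphism, and then control the geometry of the fibers. What changes between the two parts is the set of tools available for that control, and this is dictated entirely by the field $k$.

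For part (1) I work in characteristic zero, where Hironaka's resolution lets me replace $X$ by a smooth projective threefold in the same birational class. The plan is to fix a very ample $L$, pass to $|mL|$ for $m\gg 0$, and choose a general pencil $P\subset |mL|$, giving a dominant rational map from $X$ to $\mathbb{P}^1$. Bertini in characteristic zero guarantees that the general member of $P$ is a smooth connected surface and that the base locus $B$ is a smooth curve; blowing up $B$ yields a smooth threefold $\widetilde{X}$ with a morphism $f\colon\widetilde{X}\to\mathbb{P}^1$, and Stein factorization (the pencil being irreducible, so the general fiber is connected) shows $f$ has connected fibers with smooth general fiber. Thus $\widetilde{X}$ is a smooth threefold carrying a connected morphism onto a smooth curve, i.e. $\widetilde{X}\in\mathscr{C}(k,3)$. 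The only point requiring care is keeping the total space smooth across the modification, which holds precisely because the center $B$ is smooth.

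For part (2) the field is finite, and the very first step above fails: the classical Bertini smoothness theorem is false over $\mathbb{F}_q$, since a general $k$-rational member of a linear system need not be smooth. The plan is to substitute Poonen's Bertini theorem on the density of smooth hypersurface sections of high degree, together with the finite-field theory of Lefschetz pencils (Katz, Gabber), embedding $X$ by $|mL|$ with $m\gg 0$ so that the dual variety is met transversally by a $k$-rational line and the fibers of the resulting pencil are smooth away from ordinary double points. After blowing up the axis of the pencil I obtain a projective variety with a connected morphism to $\mathbb{P}^1$ whose singular fibers each carry a single node. The remaining task is to cut this down to \emph{one} singular fiber with one ordinary double point: here I would either restrict to a generic line through a single transversal point of intersection with the dual variety and recompactify over a smaller base curve $C\subset\mathbb{P}^1$, or run a point-counting/averaging argument over $\mathbb{F}_q$ to select a pencil with exactly one critical value. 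In either case the output is birational to $X$ and lies in $\mathscr{C}(k,n)$.

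The main obstacle is concentrated in part (2): everything hinges on producing a Lefschetz-type pencil over a finite field whose singular locus is controlled down to a single ordinary double point. This is exactly where the finite-field hypothesis is essential, since one must both replace ordinary Bertini by its Poonen density version and verify that the required transversality of the pencil with the dual variety is realized by a $k$-rational line meeting $X^\vee$ in the prescribed way. By contrast, the characteristic-zero argument of part (1) is essentially formal once resolution of singularities and Bertini are in hand, so I expect it to pose no serious difficulty.
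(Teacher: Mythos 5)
Your proposal diverges from the paper's route and has genuine gaps in both parts; it also starts from a different, and here inadequate, notion of dominance. The paper does not reprove this theorem: it quotes it from \cite{B4} and records the proof mechanism, namely that the class of Lefschetz-pencil fibrations (connected morphism onto $\mathbf{P}^1$, isolated singular fibers with one ordinary double point each) is dominant over any field, and that statement (1) then follows by Brieskorn--Tyurina simultaneous resolution. In \cite{B4} a class is dominant when every projective variety is \emph{dominated by} a member through a generically finite surjective map, not when it is birational to one, and this matters. For part (1), the class consists of threefolds with a \emph{smooth} connected morphism onto a curve, i.e.\ every fiber is smooth. Your Bertini-plus-blow-up construction only yields a morphism with smooth \emph{general} fiber: a pencil in $|mL|$ has singular members (their number is governed by the degree of the dual variety), and blowing up the smooth base locus does nothing to the singular fibers. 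The missing idea --- the one the paper explicitly names --- is to take a Lefschetz pencil, so that the singular fibers are surfaces with a single ordinary double point each, and then, after a finite base change $C \to \mathbf{P}^1$, resolve all the fiberwise double points simultaneously (Brieskorn--Tyurina), producing a smooth threefold with a smooth connected fibration over the smooth curve $C$. Since the base change has degree greater than $1$, the resulting member dominates $X$ generically finitely but is not birational to it; so under your birational reading of dominance the argument does not even make sense, and no Bertini-type argument can land in the class, because the singular fibers cannot be wished away on a birational model carrying the same pencil.

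For part (2) your toolbox (Poonen's Bertini theorem, Lefschetz pencils over finite fields) is the right starting point, but the crux --- cutting down to \emph{one} singular fiber with one node --- is exactly where both of your suggestions fail. You cannot ``recompactify over a smaller base curve $C \subset \mathbf{P}^1$'': removing the unwanted critical values leaves a quasi-projective family, and any projective compactification reinstates boundary fibers you do not control. And a pencil with exactly one \emph{geometric} critical value is impossible in general, since the number of geometric singular members of a Lefschetz pencil equals the degree of the dual variety $X^\vee$ when it is a hypersurface. The mechanism that actually uses the finite-field hypothesis is Galois-orbit irreducibility: over a finite field one chooses the pencil line so that its scheme-theoretic intersection with $X^\vee$ is a single closed point (one Frobenius orbit), whose existence follows from Lang--Weil-type point counting; then the fibration has literally one singular fiber, lying over a closed point of the base, and its singular locus is a single ordinary double point. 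Your ``point-counting/averaging'' remark gestures toward this, but as written (``exactly one critical value'') it asks for something false at the level of geometric points, and without the closed-point reformulation the finite-field hypothesis is doing no work in your argument.
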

The main idea in \cite{B4} was to construct fibrations using Lefschetz pencils.\\
\textbf{Fact.} For  any  field k and  integer $n \geq 2$,  the  class  of  projective  varieties  admitting  a connected morphism onto $\mathbf{P}^1_k$, with isolated singular fibers whose singular locus consists of one ordinary double point, is dominant. Statement (1) of the Theorem is then an immediate consequence of the Brieskorn-Tyurina’s simultaneous resolution of surface ordinary double points, which in fact provides a simultaneous resolution of the fibers of the fibration induced by the Lefschetz pencil.

Since the curves in our family $u_C$ are equigeneric - meaning that the geometric genus is constant along the fibers - we have that $u^{norm}_C$ is again an equigeneric family, with smooth general member.

\begin{rem}(\cite{B4})

Let $X$ be a smooth surface with negative Kodaira dimension.  Then for $n$ sufficiently  big,  there  exists  a  smooth  proper  curve $C$ and  a  non-constant  morphism $f:C\to S^{k}_{n} \backslash S^{{k+1}}_{n}$.  Therefore, $X$ can be dominated by an equigeneric family of curves.
\end{rem}

\begin{thm}(\cite{B4},Brunella, Corlette and Simpson). Let $(X,\mathscr{F})$ be a smoothly foliated surface of general type. Then at least one of the following happens:
\begin{itemize}
    \item $X$ admits a smooth fibration $p:X \to C$ onto a smooth curve $C$ and $\mathscr{F}$ is tangent to $p$;
\item $\rho: \Gamma \to P SL_2(\mathbf{R})$ is rigid and integral,  there exists a quasiprojective  polydisk  quotient, Y,  and a natural morphism $X\to Y$such that $\mathscr{F}$ is induced by one of tautological codimension one foliations on $Y$.
\end{itemize}
\end{thm}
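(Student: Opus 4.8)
The plan is to reduce the structural dichotomy to the classification of rank-two representations of the fundamental group, following the non-abelian Hodge-theoretic strategy of Corlette and Simpson, with the foliation itself supplying the representation through a leafwise uniformization in the sense of Brunella. Write $\Gamma=\pi_1(X)$ (allowing, if necessary, an orbifold fundamental group over a mild orbifold base). Because $(X,\mathscr{F})$ is a \emph{smooth} foliated surface of general type, the foliation has no singular points and its leaves are honest Riemann surfaces, so the first step is to equip the leaves with their leafwise Poincaré metric. Brunella's uniformization theorem for foliations on surfaces of general type guarantees that, after passing to the universal cover $\tilde X$, there is a leafwise developing map $\operatorname{dev}\colon \tilde X \to \mathbb{H}$ into the upper half-plane, equivariant with respect to a holonomy representation $\rho\colon \Gamma \to \mathrm{PSL}_2(\mathbf{R})$. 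This representation is the central object, and the two alternatives of the theorem are read off from its algebraic properties.

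The second step is to run the Corlette--Simpson dichotomy on $\rho$, according to the Zariski closure of its image. If $\rho$ is not Zariski-dense, then its image is (virtually) contained in a Borel subgroup or a torus, the representation is reducible, and the invariant line or flag produces a closed meromorphic one-form; from this I would build a fibration $p\colon X \to C$ onto a curve, to which $\mathscr{F}$ is then tangent, giving the first alternative. If $\rho$ is Zariski-dense, the Corlette--Simpson factorization theorem for rank-two local systems on quasi-projective varieties applies: either $\rho$ \emph{factors through an orbicurve}, in which case the factorization map is (a resolution of) the sought fibration $p\colon X\to C$ with $\mathscr{F}$ tangent to $p$; or $\rho$ is \emph{rigid and integral}, and then by Corlette--Simpson it originates from a polydisk Shimura modular stack, so that pulling back the universal object along the classifying morphism $X \to Y$ onto the quasi-projective polydisk quotient $Y$ identifies $\mathscr{F}$ with the pull-back of one of the tautological codimension-one foliations on $Y$, which is the second alternative.

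The third step is to match the analytic output of the uniformization with the algebraic tangency statements. In the fibration case I must verify that the leaves of $\mathscr{F}$ are exactly the fibers of $p$, which follows because $\operatorname{dev}$ is constant along fibers and the holonomy factors through $\pi_1(C)$. In the modular case I must check that the natural morphism $X\to Y$ pulls the tautological foliation back to $\mathscr{F}$, which follows from the equivariance of $\operatorname{dev}$ together with the fact that the polydisk factors of $Y$ carry the tautological foliations by construction. Combining the two branches exhausts all possibilities and yields the stated dichotomy.

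The hard part will be the first step: establishing the leafwise uniformization rigorously and promoting the leafwise hyperbolic structure to a genuine \emph{global} representation $\rho\colon \Gamma \to \mathrm{PSL}_2(\mathbf{R})$ with the regularity required to feed into Corlette--Simpson. For a general singular foliation this fails, and it is precisely the smoothness of $\mathscr{F}$ together with $X$ being of general type (so that the canonical bundle $K_{\mathscr{F}}$ is suitably positive and Brunella's positivity and vanishing estimates apply) that makes the leafwise Poincaré metric vary holomorphically and the developing map well-defined. A secondary obstacle is the careful treatment of the non-Zariski-dense branch, where $\rho$ may be unitary or have quasi-unipotent image and the fibration must be produced by hand from the invariant transverse structure rather than directly from the Corlette--Simpson factorization.
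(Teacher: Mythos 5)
The first thing to note is that the paper contains no proof of this statement at all: it appears in the ``Further directions'' section, quoted verbatim from \cite{B4} and attributed to Brunella, Corlette and Simpson, purely as background motivation for the authors' hypothesis about equigeneric families. So there is no internal argument to compare yours against; your proposal has to be measured against the literature the paper cites, where the actual proof does follow the skeleton you chose (Brunella's uniformization theory for foliated surfaces of general type feeding a representation into the Corlette--Simpson classification of rank-two representations of quasi-projective fundamental groups). At that level of architecture your Step 2 is essentially the right reduction: Zariski-dense $\rho$ either factors through an orbicurve, giving the fibration alternative, or is rigid and integral and of polydisk Shimura-modular origin, giving the second alternative.

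The genuine gap is in your Step 1, and it is not a technicality. You propose to extract $\rho\colon\Gamma\to \mathrm{PSL}_2(\mathbf{R})$ from a \emph{leafwise} developing map $\operatorname{dev}\colon\tilde X\to\mathbb{H}$ obtained from the leafwise Poincar\'e metric. No such single $\Gamma$-equivariant map exists in general: the leafwise metric uniformizes each leaf separately, its holonomy is a representation of the fundamental group of each individual leaf, and the uniformizations vary from leaf to leaf, so leafwise data alone does not assemble into one representation of $\Gamma=\pi_1(X)$. The representation in the theorem is the holonomy of a \emph{transverse} hyperbolic structure, whose developing map $\tilde X\to\mathbb{H}$ is constant along the lifted leaves --- which is precisely why, in the modular case, composing $\tilde X\to\mathbb{H}^n$ with one coordinate projection exhibits $\mathscr{F}$ as the pullback of a tautological codimension-one foliation on the polydisk quotient $Y$. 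Your own Step 3 betrays the confusion: you assert $\operatorname{dev}$ is ``constant along fibers,'' which is incompatible with it being a leafwise uniformization. The hard content you would need from Brunella is therefore different from what you state: namely, that a smooth foliation on a surface of general type is either tangent to a smooth fibration or admits such a transverse hyperbolic structure; only with that dichotomy in hand does the Corlette--Simpson machine run as in your Step 2. Your treatment of the non-Zariski-dense branch (producing the fibration ``by hand'' from an invariant flag) is also under-argued, but that is secondary to the misdirected construction of $\rho$.
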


\section{Future directions: Calabi-Yau metric}
One of our future projects concerns the metric issues on the vector bundles over surfaces. We plan to find and write down an explicit formula for the Calabi-Yau metric on a vector bundle $E \to X$ over the complex surface $X$.

\bibliographystyle{amsplain}

\end{document}